\title{Degeneracies in quasi-categories}
\author{Wolfgang Steimle}
\address{Universit\"at Ausburg\\
               Institut f\"ur Mathematik\\
               Germany}
\email{wolfgang.steimle@math.uni-augsburg.de}
\date{February 22, 2018}
\DeclareMathAlphabet{\matheurm}{U}{eur}{m}{n}
\newcommand{\Set}{\matheurm{Set}}
\DeclareMathOperator{\id}{id}
  \newcommand{\cala}{\mathcal{A}}
  \newcommand{\calc}{\mathcal{C}}
\theoremstyle{plain}
\newtheorem{theorem}{Theorem}[section]
\newtheorem{lemma}[theorem]{Lemma}
\newtheorem{corollary}[theorem]{Corollary}
\newtheorem{addendum}[theorem]{Addendum}
\theoremstyle{definition}
\newtheorem{definition}[theorem]{Definition}
\theoremstyle{remark}
\newtheorem*{remark*}{Remark}
\newtheorem*{example*}{Example}
\newtheorem*{examples*}{Examples}
\newtheorem*{notation*}{Notation}
\newcommand{\arincl}{\ar@{^{(}->}}
\newcommand{\arinclinv}{\ar@{_{(}->}}
\newcommand{\op}{^{op}}
\newcommand{\Deltainj}{\Delta^{\operatorname{inj}}}
\newcounter{commentnr}
\begin{document}

\begin{abstract}
In this note we show that a semisimplicial set with the weak Kan condition admits a simplicial structure, provided any object allows an idempotent self-equivalence. Moreover, any two choices of simplicial structures give rise to equivalent quasi-categories. The method is purely combinatorial and extends to semisimplicial objects in other categories; in particular to semi-simplicial spaces satisfying the Segal condition (semi-Segal spaces). 
\end{abstract}

\maketitle

\section{Statement of the main results}

A semisimplicial set (called $\Delta$-set by \cite{RS}) is a functor $(\Deltainj)\op\to\Set$, where $\Deltainj$ is the category of the totally ordered finite sets $[n]=\{0,\dots,n\}$ and strictly monotone maps. Rourke--Sanderson \cite{RS} (see also \cite{McC}) showed that any semisimplicial set satisfying the Kan condition admits a simplicial structure. In this note we investigate under which condition there is a simplicial structure on a semisimplicial set which merely satisfies the ``weak Kan condition'' that all \emph{inner} horns can be filled. For notational simplicity we will refer to such an object as \emph{quasi-semicategory}. (Here ``semi'' stands for semisimplicial. We do not intend to suggest that such an object is, in general, a model for a non-unital infinity-category.)

Let $X$ be a semisimplicial set. For $f\in X_1$, we write $f\colon x\to y$ where $x=d_1f$ and $y=d_0 f$. If $f,g,h\in X_1$, we write $g\circ f\simeq h$ if there is a 2-simplex $\sigma$ such that $d_1\sigma = h$, $d_2\sigma=f$, and $d_0\sigma = g$. The symbol $\Delta^n$ will denote the semisimplicial $n$-simplex (\emph{i.e.}, the presheaf represented by $[n]$), and $\Lambda^n_i\subset \Delta^n$ the $(n,i)$-horn.

\begin{definition}
\begin{enumerate}
 \item $f\colon x\to x$ in $X$ is called \emph{idempotent} if $f\circ f\simeq f$ holds.
 \item A morphism $f\in X_1$ is called \emph{equivalence} if $f$ is both cartesian and cocartesian -- that is, if for any $n\geq 2$ there is a filler for any horn $\Lambda_n^n\to X$ whose last edge is $f$ and for any horn $\Lambda^n_0\to X$ whose first edge is $f$.
\end{enumerate}
\end{definition}

\begin{examples*}
\begin{enumerate}
\item\label{item:ex1} If $X$ is a quasi-category, by Joyal \cite{Joyal} this notion of equivalence agrees with the usual notion of equivalence (or quasi-isomorphism).
\item\label{item:ex2} Let $X=N(\calc)$ be the nerve of a non-unital category (so that $X$ is a quasi-semicategory). It is not hard to see that $f\colon x\to y$ is an equivalence in our sense if and only if for any object $z$, the maps
\[-\circ f\colon \calc(y,z)\to \calc(x,z)\quad\mathrm{and}\quad f\circ -\colon \calc(z,x)\to \calc(z,y)\]
are bijective. 
%
\end{enumerate}
\end{examples*}

%

If $X$ is a quasi-category and $x\in X_0$, then the degeneracy $s_0(x)$ of $x$ has the property of being an idempotent equivalence of $x$. Our first result is a converse to this statement. We will say that a quasi-semicategory $X$ \emph{has a simplicial structure} if it is the underlying semisimplicial set of a simplicial set (which then is automatically a quasi-category).

\begin{theorem}\label{thm:existence}
Let $X$ be a quasi-semicategory and let $s_0\colon X_0\to X_1$ be any function such that for each $x\in X_0$, $s_0(x)$ is an idempotent equivalence $x\to x$. Then $X$ has  a simplicial structure whose degeneracy in degree $0$ coincides with $s_0$.
\end{theorem}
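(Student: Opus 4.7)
The plan is to construct the degeneracies $s_i\colon X_n\to X_{n+1}$ by induction on $n$, with $n=0$ being the given data. To handle all the $s_i$ for a single $\sigma\in X_n$ uniformly, I build a semisimplicial map $\Phi_\sigma\colon \Delta^n\times\Delta^1\to X$, where the prism $\Delta^n\times\Delta^1$ carries its canonical triangulation with top-dimensional slabs $T_0,\ldots,T_n$ defined by $T_k=[(0,0),\ldots,(k,0),(k,1),\ldots,(n,1)]$, and declare $s_k(\sigma):=\Phi_\sigma(T_k)$. The map $\Phi_\sigma$ must satisfy the boundary constraints that both ends of the prism equal $\sigma$ and that its $i$-th side $\Delta^{n-1}\times\Delta^1$ restricts to the cylinder $\Phi_{d_i\sigma}$ constructed in the previous inductive step; this encodes the simplicial identities of the form $d_is_j$ from the start via the incidences of the prism triangulation.

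The slabs are then constructed in the order $T_0,\ldots,T_n$. The face of $T_k$ opposite the next slab is $d_{k+1}T_k$, which will become the common ``diagonal'' $D_k:=d_{k+1}T_k=d_{k+1}T_{k+1}$ shared with $T_{k+1}$. For $k<n$, all other faces of $T_k$ are fixed: $d_kT_k$ is $\sigma$ itself (when $k=0$) or the diagonal $D_{k-1}$ from the previous step, while the side faces are prescribed by the cylinders $\Phi_{d_j\sigma}$. Since $0<k+1<n+1$, the horn $\Lambda^{n+1}_{k+1}$ is inner, so $T_k$ can be built as a filler using only the weak Kan property.

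The main obstacle is the final slab $T_n$: all $n+2$ of its faces are now prescribed (the sides as before, $d_nT_n=D_{n-1}$, and $d_{n+1}T_n=\sigma$), so the task is no longer a pure horn-filling problem. The last edge of $T_n$, running from vertex $(n,0)$ to $(n,1)$, equals $s_0(x_n)$, where $x_n$ is the final vertex of $\sigma$; by hypothesis this is an idempotent equivalence. Being cartesian, it allows us to fill the outer horn $\Lambda^{n+1}_{n+1}$ consisting of the faces at positions $0,\ldots,n$, producing some candidate simplex whose $(n+1)$-th face is an auxiliary $\tau$ rather than $\sigma$. To close the gap, my plan is to embed the problem into the filling of an inner horn $\Lambda^{n+2}_{n+1}$ in which one additional face records an idempotence witness $s_0(x_n)\circ s_0(x_n)\simeq s_0(x_n)$ (thickened to dimension $n+1$ by the cylinders already at our disposal), and then to extract $T_n$ with the correct $(n+1)$-th face as a suitable face of the resulting $(n+2)$-simplex. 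This single step is the only place where the equivalence and the idempotence hypotheses on $s_0$ are jointly needed.

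It remains to check the simplicial identities $s_is_j=s_{j+1}s_i$ for $i\leq j$, which amount to the compatibility of the assignment $\sigma\mapsto\Phi_\sigma$ with the degeneracies being constructed. I expect to handle this by running an analogous prism construction one dimension up, on $\Delta^n\times\Delta^1\times\Delta^1$, once again exploiting that all its vertical edges are of the form $s_0(x)$ and hence idempotent equivalences.
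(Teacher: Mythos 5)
Your prism decomposition does not actually deliver the simplicial identities $d_ks_k=\id=d_{k+1}s_k$. With $s_k(\sigma):=\Phi_\sigma(T_k)$ one has $d_ks_k(\sigma)=d_kT_k=D_{k-1}$ and $d_{k+1}s_k(\sigma)=D_k$, so these identities require that \emph{every} diagonal $D_0,\dots,D_{n-1}$ equal $\sigma$, not just the two ends of the prism. In your construction the $D_k$ are uncontrolled by-products of inner-horn filling: filling $\Lambda^{n+1}_{k+1}$ produces \emph{some} face in position $k+1$, with no reason for it to be $\sigma$. What you obtain is only the identities $d_is_j$ for $i\neq j,j+1$, together with $d_0s_0=\id$, $d_{n+1}s_n=\id$, and the consistency $d_{k+1}s_k=d_{k+1}s_{k+1}$; that is a strictly weaker structure than a simplicial one. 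Consequently the over-determination you diagnose at the last slab in fact occurs at every slab: each $T_k$ must have all $n+2$ faces prescribed (with $\sigma$ in positions $k$ and $k+1$). Moreover, the repair you sketch for $T_n$ hinges on the vertical edge $s_0(x_n)$ sitting at the \emph{end} of the simplex, where cartesianness applies; for an intermediate slab the vertical edge $[(k,0),(k,1)]$ occupies positions $k,k+1$, so neither cartesianness nor cocartesianness can be invoked, and you would need a genuinely different correction at every interior diagonal. This is the central difficulty of the theorem, and your write-up localizes it incorrectly. (For contrast, the paper inducts over the degeneracy index $N$ rather than over the simplex: the horn $\Lambda^{n+1}_{N+1}$ is filled with the face in position $N$ already forced to equal $x$, so only the single identity $d_{N+1}s_N=\id$ is postponed per degeneracy, and it is then repaired by building a candidate double degeneracy $T_N$ --- with the idempotence $2$-simplex as induction beginning --- and setting $s_N:=d_NT_N$.)

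A second gap concerns $s_is_j=s_{j+1}s_i$ and the well-definedness of your choices on degenerate simplices. Since you construct all of $\Phi_{s_i(\sigma)}$ at stage $n+1$ by free horn filling, there is no reason these already-made choices should extend over the triangulated square prism $\Delta^n\times\Delta^1\times\Delta^1$ you invoke afterwards; the compatibility must be imposed \emph{before} the fillers are chosen. Concretely, on a simplex of the form $s_i(y)$ the value of $s_j$ is forced by the very identity you want, and one must check that two presentations $s_i(y)=s_j(y')$ force the same value; this needs a statement to the effect that the degeneracies built so far fit into pull-back squares (the paper's Lemma on retracts of injections). Your final paragraph does not engage with either point, and the $\Lambda^{n+2}_{n+1}$ repair step itself is asserted rather than verified. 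The overall flavor --- inner horns for the bulk, cartesianness plus the idempotence witness at the critical step --- matches the paper, but as written the argument does not produce a simplicial structure.
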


\begin{corollary}[Rourke-Sanderson]\label{cor:Kan}
Any semisimplicial set satisfying the Kan condition has a simplicial structure.
\end{corollary}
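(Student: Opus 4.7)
The plan is to deduce the corollary from Theorem~\ref{thm:existence}. Since a Kan semisimplicial set is \emph{a fortiori} a quasi-semicategory, it suffices to produce, for every vertex $x$ of such an $X$, an idempotent equivalence $e_x\colon x\to x$; then Theorem~\ref{thm:existence}, applied with $s_0(x):=e_x$, furnishes the simplicial structure.

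First I would observe that every edge of a Kan semisimplicial set is automatically an equivalence in the sense of the paper, since the equivalence condition only demands the fillability of certain outer horns $\Lambda^n_0$ and $\Lambda^n_n$, and the Kan condition provides fillings for all horns. The task therefore reduces to constructing an idempotent self-edge $e\colon x\to x$ at each vertex $x$.

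My construction would use three horn fillings. First, apply the Kan condition to $\Lambda^1_0\hookrightarrow\Delta^1$ at $x$ to pick any outgoing edge $f\colon x\to y$. Next, fill the $\Lambda^2_2$-horn whose two non-missing faces $d_0$ and $d_1$ are both equal to $f$; the filler $\sigma_0$ has $d_2\sigma_0=e\colon x\to x$ and already exhibits $f\circ e\simeq f$. Finally, assemble a $\Lambda^3_3$-horn on the vertex tuple $(x,x,x,y)$ by declaring $d_0\tau=d_1\tau=d_2\tau:=\sigma_0$. Any filler $\tau$ then has $d_3\tau$ equal to a 2-simplex on $(x,x,x)$ all of whose three $1$-faces are $e$, and this witnesses $e\circ e\simeq e$.

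The only step needing real care is the check that the three copies of $\sigma_0$ in the last step really do assemble into a valid $\Lambda^3_3$-horn, i.e., that the induced $1$-faces shared between pairs of $2$-simplices match. This is a routine bookkeeping exercise via the simplicial identities: on the chosen vertex tuple every $1$-face between two of the first three vertices must be $e$ and every $1$-face to $y$ must be $f$, which is exactly what the three faces of $\sigma_0$ supply.
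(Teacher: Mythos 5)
Your proposal is correct and follows essentially the same route as the paper, which deduces the corollary from the Addendum: there, too, one picks an auxiliary edge via a $\Lambda^1$-filling, produces a self-edge by filling an outer $2$-horn with two copies of that edge, and then fills an outer $3$-horn built from three copies of the resulting $2$-simplex to witness idempotency, noting that every edge of a Kan semisimplicial set is an equivalence. The only (immaterial) difference is one of orientation: the paper runs the auxiliary edge into $x$ and uses a $(3,0)$-horn, whereas you run it out of $x$ and use a $(3,3)$-horn.
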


Theorem \ref{thm:existence} comes with a relative version, see Theorem \ref{thm:relative} below. From the relative version we will deduce:

\begin{theorem}\label{thm:uniqueness}
Let $\calc$, $\calc'$ be quasi-categories which have the same underlying semi\-simplicial set. Then $\calc$ and $\calc'$ are categorically equivalent.
\end{theorem}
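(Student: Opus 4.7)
My plan is to build an auxiliary quasi-category $\calc''$ into which both $\calc$ and $\calc'$ embed as full sub-simplicial sets in an essentially surjective way; the standard fact that a full, essentially surjective sub-quasi-category inclusion is a categorical equivalence then yields a zigzag $\calc \hookrightarrow \calc'' \hookleftarrow \calc'$ of categorical equivalences, which proves the theorem.

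Let $X$ denote the common underlying semisimplicial set, and let $J$ denote the nerve of the walking isomorphism---a contractible Kan complex with two vertices $0,1$ joined by an equivalence $\alpha\colon 0\to 1$. Set $Y_n := X_n\times J_n$ with componentwise face maps. Then $Y$ is manifestly the underlying semisimplicial set of both simplicial products $\calc\times J$ and $\calc'\times J$; since both of these products are quasi-categories, $Y$ is a quasi-semicategory. The sub-semisimplicial set of $Y$ consisting of simplices $(\sigma,\tau)$ for which $\tau$ is totally degenerate decomposes as a disjoint union $(X\times\{0\})\sqcup(X\times\{1\})$, each summand canonically identified with $X$; install on these summands the simplicial structures of $\calc$ and $\calc'$ respectively. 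All vertices of $Y$ already lie in this subobject (since $J_0=\{0,1\}$), so the relative version of Theorem \ref{thm:existence} (Theorem \ref{thm:relative}) extends this to a simplicial structure on all of $Y$, producing a quasi-category $\calc''$ together with sub-simplicial-set inclusions $\iota_0\colon\calc\hookrightarrow\calc''$ and $\iota_1\colon\calc'\hookrightarrow\calc''$.

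I then verify that $\iota_0$ is full and essentially surjective; the case of $\iota_1$ is symmetric. For fullness: if $(\sigma,\tau)\in Y_n$ has all its vertices in $X\times\{0\}$, then every vertex of $\tau$ equals $0$; since the walking isomorphism has no nontrivial endomorphisms of $0$, this forces $\tau$ to be totally degenerate at $0$, whence $(\sigma,\tau)\in\iota_0(\calc)$. For essential surjectivity: for $(x,1)\in\calc''_0$, the $1$-simplex $(s_0^\calc(x),\alpha)\in Y_1$ connects $(x,0)=\iota_0(x)$ with $(x,1)$, and is an equivalence in $\calc''$ because the notion of equivalence depends only on the semisimplicial structure (Example \ref{item:ex1}) and, in the quasi-category $\calc\times J$, this $1$-simplex is the product of an identity with an equivalence in $J$.

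The main obstacle is the application of the relative version Theorem \ref{thm:relative} in the form used above, namely that one may extend a simplicial structure from an arbitrary simplicially-structured sub-semisimplicial subcomplex (here $X\times\{0\}\sqcup X\times\{1\}\subset Y$) rather than merely from the zero skeleton; one must also check that the two given simplicial structures on the summands are compatible with the ambient semisimplicial structure of $Y$, which is immediate since componentwise face maps preserve total degeneracy. Granted this, the argument is completed by the well-known quasi-categorical principle that a full, essentially surjective subcomplex inclusion is a categorical equivalence.
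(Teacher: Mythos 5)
Your construction of the auxiliary quasi-category is exactly the paper's: both form the product of the common underlying semisimplicial set with (the nerve of) the walking isomorphism $J$ and apply the relative existence theorem (Theorem \ref{thm:relative}) with $A=\calc\times\{0\}\sqcup\calc'\times\{1\}$, whose vertices exhaust those of the product, so that the required $s_0$ is forced and its hypotheses (idempotence, cartesianness, cocartesianness) hold because it is the genuine degeneracy of the quasi-category $\calc\times J$, resp.\ $\calc'\times J$. Where you diverge is the concluding step. The paper applies Theorem \ref{thm:relative} over the base $Y=J$ rather than over the point, so that the projection $p\colon \calc''\to J$ comes out simplicial and is a cartesian fibration, and then quotes \cite[3.3.1.3]{HTT} to identify the fibers over $0$ and $1$ up to categorical equivalence. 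You instead work over the terminal object and exhibit $\calc$ and $\calc'$ as the full subcategories of $\calc''$ spanned by the vertices over $0$ and over $1$; your key observation -- that a simplex of $\calc''$ all of whose vertices lie over $0$ must have constant $J$-component because $J$ has no non-identity endomorphisms -- is correct, as is the essential surjectivity via the equivalence $(s_0(x),\alpha)$ (equivalence being a purely semisimplicial notion by Joyal's theorem, it can be verified in $\calc\times J$). Each route rests on one standard external fact: the paper on invariance of fibers of a cartesian fibration along an equivalence in the base, yours on full, essentially surjective inclusions being categorical equivalences. Your variant asks slightly less of Theorem \ref{thm:relative} (no simpliciality of $p$ is needed) at the cost of the explicit fullness and essential-surjectivity checks; both arguments are valid.
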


In section \ref{sec:proofs} we will prove Theorems \ref{thm:existence} and \ref{thm:uniqueness} and deduce Corollary \ref{cor:Kan}. In section \ref{sec:generalization} we will generalize the results to semisimplicial objects in other categories.

The results of this paper will be used by the author in the proof of an analog of Waldhausen's additivity theorem in the setup of cobordism categories \cite{Steimleforward}. The point is that cobordism categories are naturally categories without identities, just as cobordism \emph{spaces} (considered by Quinn, Ranicki, Laures--McClure and others) are naturally semi-simplicial sets, while it is usually more convenient to work with simplicial objects.

\section{The relative existence theorem}\label{sec:proofs}

We start by recalling some terminology. A semisimplicial map $p\colon X\to Y$ is called \emph{inner fibration} if any commutative diagram of semisimplicial sets
\begin{equation}\label{eq:lifting_diagram}
\xymatrix{
 \Lambda^n_i \ar[d] \ar[r]^h & X \ar[d]^p\\
 \Delta^n \ar[r]^k \ar@{.>}[ru]& Y
} 
\end{equation}
has a diagonal lift as dotted in the diagram, provided $0<i<n$. An element $a\in X_1$ is called \emph{$p$-cartesian} if any commutative diagram \eqref{eq:lifting_diagram} has a diagonal lift, provided $i=n$ and the last edge of $h$ is $a$; it is called $p$-cocartesian if it is $p\op$-cocartesian as an element of $X_1\op$. These definitions are in accordance with  the usual simplicial notions.

If $Y$ has a simplicial structure, and $f\colon x\to x$ is a 1-simplex in $X$, then we call $f$ \emph{$p$-idempotent} if there is a 2-simplex $\sigma\in X_2$ all whose boundaries are $f$, and which projects to the degenerate simplex $s_0^2(p(x))\in Y_2$.

\begin{theorem}\label{thm:relative}
Let $p\colon X\to Y$ be an inner fibration of semisimplicial sets and $f\colon A\to X$ the inclusion of a semisimplicial subset; assume that $Y$ and $A$ have simplicial structures such that $p\circ f$ is a simplicial map. Let $s_0\colon X_0\to X_1$ be a map, compatible with the degeneracies $s_0$ on $A$ and $Y$, and such that for all $x\in X_0$, $s_0(x)$ is $p$-idempotent, $p$-cartesian, and $p$-cocartesian. 

Then $s_0\colon X_0\to X_1$ extends to a simplicial structure on $X$ such that $f$ and $p$ are simplicial.
\end{theorem}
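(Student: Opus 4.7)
The plan is to construct the missing degeneracy maps $s_i \colon X_n \to X_{n+1}$ by induction on $n \geq 0$, with the base case $s_0 \colon X_0 \to X_1$ supplied by hypothesis. On simplices in $A_n$ we use the simplicial structure already present; the compatibility hypothesis on $s_0$ ensures this is consistent with the given data on $X_0$. For $\sigma \in X_n \setminus A_n$ and any $0 \leq i \leq n$, the simplicial identities together with the requirement $d_i s_i(\sigma) = d_{i+1} s_i(\sigma) = \sigma$ prescribe all $n+2$ faces of the sought $s_i(\sigma) \in X_{n+1}$. Since in a semisimplicial set only horns can be filled (not general spheres), the strategy is to fill a suitable horn of $s_i(\sigma)$ and then verify that the remaining ``free'' face automatically agrees with the prescribed value.

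For the inner case, by which I mean some $k \in \{1, \dots, n\} \setminus \{i, i+1\}$ exists (available for $0 < i < n$ with $n \geq 3$, and for $i \in \{0, n\}$ with $n \geq 2$), I would fill the inner horn $\Lambda^{n+1}_k \to X$ via the inner fibration property, then certify that $d_k$ of the filler matches the value dictated by the simplicial identities through an additional inner horn fill one dimension up. In the outer and low-dimensional cases, where the only available free face is $d_0$ or $d_{n+1}$, I would exploit that $s_0(x)$ is $p$-cocartesian (resp.\ $p$-cartesian) to fill the outer horn $\Lambda^{n+1}_0$ (resp.\ $\Lambda^{n+1}_{n+1}$), whose extremal edge is forced by the simplicial identities to be $s_0$ of the appropriate vertex. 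Certification of the free face in these cases requires iterated applications of $p$-(co)cartesianness, and the $p$-idempotence of $s_0(x)$ supplies the seed via the canonical $2$-simplex with all faces equal to $s_0(x)$ projecting to $s_0^2(p(x))$, which is the first non-trivial piece of coherence data.

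Compatibility with $p$ is maintained throughout by first constructing the relevant fillers in $Y$ (which is simplicial by hypothesis) and lifting them to $X$. The principal obstacle is coordinating all these horn fillings so that every simplicial identity holds simultaneously: in addition to the face-degeneracy relations this includes the degeneracy-degeneracy identities $s_j s_i = s_i s_{j-1}$ for $i < j$ (and their analogues), each imposing further compatibility among the chosen fillers and necessitating yet more higher-dimensional horn fills. The induction succeeds because every such compatibility requirement can, after unravelling, be expressed as an inner (or suitably outer) horn-filling problem whose input data is entirely determined by previously-constructed degeneracies, so that the inner fibration property and the $p$-(co)cartesianness/$p$-idempotence of $s_0(x)$ together furnish the required fillers at every stage.
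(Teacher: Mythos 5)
You correctly identify the central difficulty: all $n+2$ faces of the sought simplex $s_i(\sigma)\in X_{n+1}$ are prescribed on the nose by the simplicial identities, whereas a horn filler controls all faces but one. But your proposed resolution of that difficulty --- fill $\Lambda^{n+1}_k$ for some $k\notin\{i,i+1\}$ and then ``certify'' that the free face $d_k$ of the filler agrees with the prescribed value by an additional horn fill one dimension up --- does not work. The free face of a horn filler is whatever it happens to be; a further horn-filling one dimension up can at best exhibit a \emph{homotopy} between that face and the prescribed simplex $s_{i-1}d_k(\sigma)$ or $s_id_{k-1}(\sigma)$, never an \emph{equality}, and the simplicial identities are strict equalities. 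There is no mechanism in a semisimplicial set for retroactively replacing one face of an already-chosen simplex. The same objection applies verbatim to your outer cases: $p$-(co)cartesianness also only fills a horn, leaving one uncontrolled face. So as written the induction cannot close.

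The paper's proof circumvents this by never attempting to certify the free face. For fixed degeneracy index $N$ it fills the horn $\Lambda^{n+1}_{N+1}$, whose missing face is precisely the one whose prescribed value is $\sigma$ itself (the identity $d_{N+1}s_N=\id$), and simply \emph{drops} that single identity, yielding an ``almost $N$-good'' system. It then repairs the defect in a second step: it constructs an auxiliary operator $T_N\colon X_n\to X_{n+2}$ (a candidate for the double degeneracy $s_N^2$, seeded in degree $0$ by the idempotence $2$-simplex) as a filler of the horn $\Lambda^{n+2}_N$, and \emph{redefines} the degeneracy as $\sigma_N:=d_NT_N$. The prescribed faces of $T_N$ then force $d_N\sigma_N=d_{N+1}\sigma_N=\id$ on the nose, and the remaining identities for $\sigma_N$ follow by direct computation from those of $T_N$; the problematic identity becomes an \emph{output} of a second horn-filling construction rather than something to be verified after the fact. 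Two further points where your sketch diverges from what is actually needed: the identities $s_Ns_i=s_is_{N-1}$ and compatibility with $A$ are not achieved by ``yet more horn fills'' but by \emph{forcing} the choice of filler on degenerate simplices and on simplices of $A$, with a pull-back lemma (faces provide the retractions) guaranteeing that the forced choices are mutually consistent; and the induction must be organized with the degeneracy index $N$ outermost (and the simplicial degree $n$ innermost), since the repair step replaces the operator $s_N$ in all degrees at once.
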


\begin{addendum}\label{addendum}
If $p$ is a Kan fibration, then a map $s_0\colon X_0\to X_1$ as required in the Theorem exists always, so that a compatible simplicial structure on $X$ exists without further hypotheses.
\end{addendum}

Theorem \ref{thm:existence} is a special case of Theorem \ref{thm:relative} where $Y$ is the terminal object and $A=\emptyset$; Corollary \ref{cor:Kan} follows from the Addendum. The relative existence theorem also implies Theorem \ref{thm:uniqueness}: Let $J$ be the groupoid with two objects $0$ and $1$ and two non-identity morphisms. We apply Theorem \ref{thm:relative} with $X=\calc\times J$, $A=\calc\times \{0,1\}$, $Y=J$, and $p$ the projection map, where $A$ carries the simplicial structure of $\calc$ over $0$ and of $\calc'$ over $1$. 
We conclude that $\calc\times J$ has a simplicial structure compatible with $\calc$ over 0 and with $\calc'$ over 1, such that $p$ is simplicial. Now note that $p$ is a cartesian fibration over $J$ so the pull-backs over $0$ and $1$ are categorically equivalent \cite[3.3.1.3]{HTT}.

We come to the proof of Theorem \ref{thm:relative}, which is a modification of the strategy from \cite{McC}. Throughout this section $X$ and $A$ will be as in the assumption of Theorem \ref{thm:relative}. For notational brevity we will give the proof only in the case where $Y$ is the terminal object $\{*\}$ so that the datum of $p$ and $Y$ may be ignored. The proof in the general case is identical, if ``filling a horn'' is replaced by ``choosing a diagonal lift''.

Recall the simplicial identities:
\begin{align}
\label{eq:d_vs_d}
d_i d_k &= d_{k-1} d_i \quad (i<k);
\\
\label{eq:d_vs_s}
d_i s_k &= 
\begin{cases}
s_{k-1} d_i  & (i<k),\\
\id & (i=k, k+1),\\
s_k d_{i-1} & (i>k+1);
\end{cases}
\\
\label{eq:s_vs_s}
s_i s_k &= s_{k+1} s_i \quad (i\leq k).
\end{align}


The construction of degeneracy maps is by induction. Let us call an \emph{$N$-good system} a system of maps $(s_k\colon X_n\to X_{n+1})$ ($n\geq 0$, $0\leq k\leq \min(n,N)$) that satisfies the simplicial identities whenever they apply, and that extends the given maps on $A$ and $X_0$. Clearly a $(-1)$-good system exists; we wish to prove that any $(N-1)$-good system $(s_0, \dots, s_{N-1})$ can be extended to an $N$-good one. 

We proceed in two steps. Let us call an \emph{almost $N$-good system} a system of maps $(s_k\colon X_n\to X_{n+1})$ ($n\geq 0$, $0\leq k\leq \min(n,N)$) satisfying the condition for being $N$-good, except that we do not require the identity $d_{N+1}s_N=\id$ to hold. 

\begin{lemma}\label{lem:step1}
Any $(N-1)$-good system extends to an almost $N$-good system.
\end{lemma}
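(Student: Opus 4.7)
The plan is to construct $s_N\colon X_n\to X_{n+1}$ by induction on $n\geq N$, at each stage defining $s_N(x)$ as a filler of a horn whose faces other than $d_{N+1}$ are forced by the required simplicial identities. For $x\in X_n$, the prescribed faces are $d_is_N(x):=s_{N-1}(d_ix)$ for $i<N$ (from $d_is_N=s_{N-1}d_i$), $d_Ns_N(x):=x$ (from $d_Ns_N=\id$), and $d_is_N(x):=s_N(d_{i-1}x)$ for $i>N+1$ (from $d_is_N=s_Nd_{i-1}$), the last being defined by the inductive hypothesis since $d_{i-1}x\in X_{n-1}$. A routine check with the simplicial identities of the $(N-1)$-good system shows that these faces are pairwise compatible, so they assemble into a horn $h\colon\Lambda^{n+1}_{N+1}\to X$.

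For $n>N$ this horn is inner ($0<N+1<n+1$) and so can be filled using the inner fibration property of $p$. The decisive case is the base $n=N\geq 1$, where $h$ is the outer horn $\Lambda^{N+1}_{N+1}$. Iterating the identity $d_0s_k=s_{k-1}d_0$ for $k\geq 1$, one computes that the last edge $(d_0)^N h$ of this horn equals $s_0(v)$, where $v\in X_0$ is the last vertex of $x$. By hypothesis $s_0(v)$ is an equivalence, hence $p$-cartesian, so this outer horn is still fillable. (For $N=0$ the base case $n=0$ is trivial because $s_0$ is given.)

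To ensure compatibility with the simplicial subset $A$ and with the already-constructed degeneracies $s_0,\ldots,s_{N-1}$, we select fillers according to a priority: first, if $x\in A_n$, take $s_N(x)$ to be its degeneracy under the simplicial structure on $A$, which fills $h$ since $f$ is simplicial; next, if $x=s_i(y)$ for some $i\leq N-1$ and $y\in X_{n-1}$, set $s_N(x):=s_i(s_{N-1}(y))$, which both fills $h$ (by the simplicial identities of the $(N-1)$-good system) and enforces the identity $s_Ns_i=s_is_{N-1}$; otherwise, pick any filler of $h$. Well-definedness of the second rule when $x=s_iy_1=s_jy_2$ with $i<j\leq N-1$ reduces, via the Eilenberg--Zilber-type relations $y_1=s_{j-1}d_iy_2$ and $y_2=s_id_jy_1$, to the identity $s_{N-1}s_k=s_ks_{N-2}$ for $k<N-1$, which is already present in the $(N-1)$-good system.

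I expect the main obstacle to be the base case $n=N$: the horn to fill is outer, and this is exactly where the hypothesis that $s_0(v)$ be an equivalence is essentially used. The remaining items---verifying that the prescribed faces genuinely assemble into a valid horn, and that the forced degenerate case is well defined---are routine bookkeeping with the simplicial identities of the $(N-1)$-good system.
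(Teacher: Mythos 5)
Your proposal is correct and follows essentially the same route as the paper: define the faces of $s_N(x)$ by the required identities to get a horn $\Lambda^{n+1}_{N+1}$, fill it as an inner horn for $n>N$ and via cartesianness of $s_0$ of the last vertex in the base case $n=N$, then impose the forced choices for $x\in A_n$ or $x$ degenerate, checking well-definedness by the Eilenberg--Zilber-type relations (which is exactly the content of the paper's Lemma \ref{lem:pullback_property}). The only item you defer to ``routine bookkeeping'' that the paper spells out is the mixed compatibility $x=f(x')=s_i(y)$ --- note your priority rule does not make this check unnecessary, since $s_Ns_i=s_is_{N-1}$ must hold even when the $A$-branch wins --- but it is handled by the same pullback/retraction argument.
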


\begin{proof}
The construction of $s_N\colon X_n\to X_{n+1}$ is by induction on $n$, starting at $n=N$. In the case $N=0$, the induction beginning is provided by the map $s_0\colon X_0\to X_1$ which exists by assumption. The induction step and, in the case $N\neq 0$, also the induction beginning, are proven by the same construction which we now explain. 

Assume that we have an $(N-1)$-good system $(s_0, \dots, s_{N-1})$ and maps $s_N\colon X_\ell\to X_{\ell+1}$ for $ N\leq \ell\leq n-1$, satisfying the condition for being almost $N$-good whenever they apply. We wish to define $s_N\colon X_n\to X_{n+1}$ so that the conditions for being almost $N$-good hold whenever they apply; that is, \eqref{eq:d_vs_s} and \eqref{eq:s_vs_s} should hold for $k=N$ except we  do not require $d_{N+1}s_N=\id$. 

For $x\in X_n$, the equations in \eqref{eq:d_vs_s} for $k=N$, $i\neq N+1$, are $(n+1)$-many equations that together prescribe the restriction of $s_N(x)$ to the  horn $\Lambda^{n+1}_{N+1}\subset \Delta^{n+1}$. Therefore we will define $s_N(x)$ as a filler for the horn $\Lambda^{n+1}_{N+1}\to X$ which is defined by the right-hand sides of the relevant equations in \eqref{eq:d_vs_s}. In more detail, we let
\[x_i=\begin{cases}
       s_{N-1}d_i(x), &(i<N),\\
       x, & (i=N),\\
       s_N d_{i-1}(x), & (i>N+1)
      \end{cases}
\]
where the operator $s_N$ in the last case acts on $X_{n-1}$ and is given by hypothesis.  We claim that 
\begin{equation}\label{eq:horn_condition}
 d_j(x_i) = d_{i-1}(x_j), \quad (j<i,\; j,i\neq N+1)
\end{equation}
so that the sequence $x_i$ for $i\neq N+1$ defines a horn $\Lambda^{n+1}_{N+1}$ in $X$. The equations \eqref{eq:horn_condition} can be easily verified by hand using the relevant equations of \eqref{eq:d_vs_s}, making a case by case distinction.  

If $n>N$ (the induction step case), the horn defined in this way is an inner horn so a filler exists because $X$ is a quasi-semicategory. If $n=N$ (the induction beginning case), this is a right horn, but applying \eqref{eq:d_vs_s} iteratively, we see that 
\[d_0^N s_N(x) = s_0 d_0^N(x)\]
so the last edge of the horn is in the image of $s_0\colon X_0\to X_1$ and therefore a cartesian morphism by our assumptions. So the horn has a filler in this case again, by definition of being cartesian.

We would like to define $s_N(x)$ as a choice of filler for this horn; however this definition is a little too crude in that we didn't ensure that the restriction of $s_N$ to $A_n$ is as required, nor that the simplicial identities \eqref{eq:s_vs_s} hold. This can be rectified as follows: First, if  $x=f(x')$ for some $x'\in A_n$, we have to and do choose $f s_N(x')$ as a filler for the horn in order to make $f$ simplicial. Second, if $x\in X_n$ is of the form $x=s_i(y)$ for some $i<N$, then the equation $s_Ns_i=s_is_{N-1}$ from \eqref{eq:s_vs_s} forces us to choose $s_N(x):=s_i s_{N-1}(y)$ as a filler for the horn. 

To complete the proof, we need to show this rule is well-defined; that is, if $x=s_i(y)=s_j(y')$ or if $x=f(x')=s_i(y)$, any of the choices leads to the the same value of $s_N(x)$. To justify this, we use the following Lemma, which we prove at the end of the section.

\begin{lemma}\label{lem:pullback_property}
In an $(N-1)$-good system, and for $i<j<N$, $k<N$, the commutative squares
\[\xymatrix{
X_{n-2} \ar[r]^{s_{j-1}} \ar[d]^{s_i} & X_{n-1} \ar[d]^{s_i} 
  && A_{n-1} \ar[r]^f \ar[d]^{s_k} & X_{n-1} \ar[d]^{s_k}\\
X_{n-1} \ar[r]^{s_j} & X_n
  && A_n \ar[r]^f & X_n
}\]
are pull-back squares.
\end{lemma}

Hence, if we can write $x=s_i(y) = s_j(y')$ for $i<j<N$, there exists a $z\in X_{n-2}$ such that $y=s_{j-1}(z)$ and $y'=s_i(z)$. Then we have
\[s_i s_{N-1}(y) = s_i s_{N-1} s_{j-1}(z) = s_j s_{N-1} s_i(z)= s_j s_{N-1} (y')\]
provided $i<j<N$ and the system is $(N-1)$-good, so that both possible definitions of $s_N(x)$ agree. Similarly, if $x=s_i(y)=f(x')$, then there exists $y'\in A_{n-1}$ with $y=f(y')$ and $s_i(y')=x'$ so
\[s_i s_{N-1}(y) = s_i s_{N-1} f(y') = f s_i s_{N-1}(y') = f s_N s_i (y') = f s_N(x') \]
and again the two possible definitions agree. 
\end{proof}

Next we come to the second step of our construction.

\begin{lemma}\label{lem:step2}
If $(s_0, \dots, s_N)$ is an almost $N$-good system, then there is a collection of maps $\sigma_N\colon X_n\to X_{n+1}$, $n\geq N$, such that $(s_0, \dots, s_{N-1}, \sigma_N)$ is $N$-good.
\end{lemma}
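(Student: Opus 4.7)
The plan is to construct $\sigma_N\colon X_n\to X_{n+1}$ together with an auxiliary map $\Phi\colon X_n\to X_{n+2}$ simultaneously by induction on $n\geq N$. We take $\Phi(x)$ to be a filler of the horn $\Lambda^{n+2}_N\to X$ with prescribed faces
\[
 d_j\Phi(x) = s_{N-1}^2 d_j(x) \quad (j<N),
\]
\[
 d_{N+1}\Phi(x) = d_{N+2}\Phi(x) = s_N(x),
\]
\[
 d_k\Phi(x) = \Phi\bigl(d_{k-2}(x)\bigr) \quad (k>N+2),
\]
the last line appealing to $\Phi$ at level $X_{n-1}$ (available by induction), and we set $\sigma_N(x) := d_N\Phi(x)$. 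The horn is inner for $N\geq 1$; for $N=0$ it is the left horn $\Lambda^{n+2}_0$, whose first edge is $s_0(v_0(x))$ (the degeneracy at the initial vertex of $x$), cocartesian by our hypothesis on $s_0$. In either case a filler exists.

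The key observation is that, with this setup, both $d_N\sigma_N=\id$ and $d_{N+1}\sigma_N=\id$ follow from the single retained identity $d_Ns_N=\id$. Indeed,
\[
 d_N\sigma_N(x) = d_Nd_N\Phi(x) = d_Nd_{N+1}\Phi(x) = d_Ns_N(x) = x,
\]
\[
 d_{N+1}\sigma_N(x) = d_{N+1}d_N\Phi(x) = d_Nd_{N+2}\Phi(x) = d_Ns_N(x) = x,
\]
using $d_Nd_N=d_Nd_{N+1}$ and $d_{N+1}d_N=d_Nd_{N+2}$ respectively. Analogous short derivations yield $d_j\sigma_N = s_{N-1}d_j$ for $j<N$ (via $d_{N-1}s_{N-1}=\id$) and $d_j\sigma_N = \sigma_Nd_{j-1}$ for $j>N+1$ (via the recursive prescription $d_{j+1}\Phi=\Phi d_{j-1}$).

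For inputs $x=f(x')$ with $x'\in A_n$, or $x=s_i(y)$ with $i<N$, we set $\sigma_N(x) := f(s_N^A(x'))$ or $\sigma_N(x) := s_is_{N-1}(y)$ respectively; well-definedness and compatibility with the generic horn-filler recipe follow from Lemma~\ref{lem:pullback_property}, exactly as in the proof of Lemma~\ref{lem:step1}.

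The main obstacle is to verify that the prescribed faces of $\Phi(x)$ do assemble into a genuine map $\Lambda^{n+2}_N\to X$, i.e.\ that the cross-identities $d_i(d_k\Phi)=d_{k-1}(d_i\Phi)$ hold for every $i<k$ with $i,k\neq N$. This is a case-by-case check: for indices $\leq N+2$ it reduces to the almost-$N$-good identities of $s_N$ combined with the $(N-1)$-good identities of $s_{N-1}$ (the slightly subtle cases being $i<N$, $k\in\{N+1,N+2\}$, which use $d_Ns_{N-1}=\id$), while for $k>N+2$ the identity $d_id_{k-2}=d_{k-3}d_i$ together with the recursion $d_k\Phi=\Phi d_{k-2}$ reduces compatibility to the analogous compatibility for $\Phi$ at the previous level, which holds inductively.
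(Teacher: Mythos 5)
Your construction is the paper's: your $\Phi$ is exactly the map $T_N$ of the paper (a candidate for the double degeneracy $s_N^2$), built by filling $\Lambda^{n+2}_N$-horns by induction on $n\geq N$, with $\sigma_N:=d_N\Phi$; the face computations you carry out are the intended ones. However, there is one genuine gap: you never use the idempotence hypothesis on $s_0$, and it is needed precisely at the base case $N=0$, $n=0$. For $x\in X_0$ your recipe prescribes only $d_1\Phi(x)=d_2\Phi(x)=s_0(x)$ and takes an arbitrary filler of the resulting $\Lambda^2_0$-horn. Such a filler produces \emph{some} edge $\sigma_0(x)=d_0\Phi(x)\colon x\to x$ with $s_0(x)\circ s_0(x)\simeq \sigma_0(x)$, but there is no reason for it to equal $s_0(x)$ itself. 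Since $0$-goodness requires the new degeneracy to extend the given map on $X_0$ (and Theorem~\ref{thm:existence}/\ref{thm:relative} promises a simplicial structure whose $s_0$ \emph{is} the given one), you must instead take $\Phi(x)$, for $x\in X_0$, to be the $2$-simplex witnessing that $s_0(x)$ is ($p$-)idempotent, i.e.\ a $2$-simplex all of whose faces are $s_0(x)$; only then does $d_0\Phi(x)=s_0(x)$.

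A secondary, more cosmetic issue: you override $\sigma_N$ on degenerate simplices and on $A$, but the override has to be imposed on $\Phi$ itself (as in the paper: $\Phi s_i:=s_N^2 s_i$ and $\Phi f:=f (s_N^A)^2$). The recursion $d_k\Phi(x)=\Phi(d_{k-2}(x))$ for $k>N+2$ and the verification of $d_j\sigma_N=\sigma_N d_{j-1}$ both evaluate $\Phi$ on faces $d_{k-2}(x)$ that may be degenerate or lie in $A$ even when $x$ is not; if $\Phi$ there is a generic filler while $\sigma_N$ there has been redefined, the identity $\sigma_N=d_N\Phi$ breaks and the induction does not close. Lifting the override to $\Phi$ (and then checking, via Lemma~\ref{lem:pullback_property}, that the forced values restrict correctly to the horn) repairs this and recovers your formulas for $\sigma_N$ on degenerate and $A$-simplices.
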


\begin{proof}
We construct maps $T_N\colon X_n\to X_{n+2}$ for $n\geq N$ such that
\begin{align}
\label{eq:d_vs_T}
d_i T_N & =
\begin{cases}
s_{N-1}^2 d_i, & (i<N),\\
s_N, & (i=N+1, N+2),\\
T_N d_{i-2}, & (i> N+2);
\end{cases}
\\
\label{eq:s_vs_T}
T_N s_i & = s_N^2 s_i, & (i<N).
\end{align}

One should think of the map $T_N$ as a candidate for the double degeneracy $\sigma_N^2$. Indeed, if $s_N$ is already $N$-good, then the operators $T_N:= s_N^2$ satisfy the above properties (plus the equation $s_N=d_N T_N$). On the other hand, if we are given an almost $N$-good system $(s_0,\dots, s_N)$, and maps $T_N$ satisfying \eqref{eq:d_vs_T} and \eqref{eq:s_vs_T}, then by setting $\sigma_N:=d_NT_N$, we obtain an $N$-good system. 

The construction of the collection $(T_N)$ is very analogous to the construction in the previous step and is by induction on $n\geq N$. In the case $N=0$, the induction beginning is given by any map $T_0\colon X_0\to X_2$ that sends $x\in X_0$ to a 2-simplex expressing the fact that $s_0(z)$ is $p$-idempotent, where we also assume that on $A_0\subset X_0$, the map is actually given by $s_0^2$. The induction beginning in the other cases and the induction step are by the same construction as follows:

Assume that we have an almost $N$-good system $(s_0, \dots, s_N)$ and maps $T_N\colon X_\ell\to X_{\ell+2}$ for $ \ell\leq n-1$, satisfying the conditions \eqref{eq:d_vs_T} and \eqref{eq:s_vs_T}. We wish to define $T_N\colon X_n\to X_{n+2}$ so that \eqref{eq:d_vs_T} and \eqref{eq:s_vs_T} are again satisfied.

For $x\in X_n$, the ($n+2$ many) equations \eqref{eq:d_vs_T} define a map $\Lambda^{n+2}_N\to X$. In more detail, if we let 
\[x_i=\begin{cases}
       s_{N-1}^2d_i(x), &(i<N),\\
       s_N(x), & (i=N+1, N+2),\\
       T_N d_{i-2}(x), & (i>N+2)
      \end{cases}
\]
then again a case-by-case calculation shows that the horn equations 
\begin{equation}\label{eq:n_horn_condition}
 d_j(x_i) = d_{i-1}(x_j), \quad (j<i,\; j,i\neq N)
\end{equation}
hold. 

If $N>0$, the horn $\Lambda^{n+2}_N$ is an inner horn which can be filled by an $(n+2)$-simplex we call $T_N(x)$. If $N=0$, then \eqref{eq:d_vs_T} shows that the first edge is $s_0$ of the first vertex, which is cocartesian by assumption. So we can fill in the horn as well to get an element $T_0(z)\in X_{n+2}$.

Again we need to modify this construction in two ways: First, if $x=f(x')$, we choose as filler the element $f s_N^2(x')$ provided by the simplicial set structure of $A$. Second, if $x\in X_n$ degenerate, then  the choice of filler $T_n(x)$ is forced to us by \eqref{eq:s_vs_T}. Again, Lemma \ref{lem:pullback_property} ensures that this is well-defined. 
\end{proof}

Lemmas \ref{lem:step1} and \ref{lem:step2} together prove the induction step and therefore Theorem \ref{thm:relative}. We now give the postponed Lemma \ref{lem:pullback_property}. It builds on the following Lemma, which is valid in an arbitrary category and whose proof is an easy exercise.

\begin{lemma}\label{lem:general_pullback_property}
Suppose that in the commutative square
\[\xymatrix{
A \ar[r]^i \ar[d]^f & B \ar[d]^g \\
X \ar[r]^j & Y
}\]
the morphism $i$ is a retract of $j$, and that $j$ is injective. Then the diagram is a pull-back diagram.
\end{lemma}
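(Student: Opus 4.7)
The plan is to unpack the retract hypothesis explicitly and then verify the pullback's universal property by a short diagram chase; no non-formal step is needed. First I would interpret ``$i$ is a retract of $j$'' in its standard form, as the existence of morphisms $\bar f\colon X\to A$ and $\bar g\colon Y\to B$ fitting into a commutative diagram
\[\xymatrix{
A \ar[r]^{f} \ar[d]^i & X \ar[d]^j \ar[r]^{\bar f} & A \ar[d]^i \\
B \ar[r]^{g} & Y \ar[r]^{\bar g} & B
}\]
whose left square is the given one and whose horizontal compositions are $\id_A$ and $\id_B$, respectively. This yields the relations $\bar f f=\id_A$, $\bar g g=\id_B$, and $i\bar f=\bar g j$, in addition to $jf=gi$; these are the only ingredients needed.

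To verify the pullback property, I would fix a test object $T$ together with morphisms $u\colon T\to X$ and $v\colon T\to B$ satisfying $ju=gv$, and propose $w:=\bar f u\colon T\to A$ as the factorisation. Two brief computations finish the existence part:
\[iw \;=\; i\bar f u \;=\; \bar g j u \;=\; \bar g g v \;=\; v,\]
while $jfw = giw = gv = ju$, so that $fw=u$ follows by the injectivity of $j$. For uniqueness, any $w'\colon T\to A$ with $fw'=u$ satisfies $w' = \bar f f w' = \bar f u = w$.

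No serious obstacle is anticipated, and the argument is essentially formal. The candidate $w=\bar f u$ is forced by the uniqueness computation, and the injectivity hypothesis on $j$ enters precisely once, to pass from $jfw=ju$ to $fw=u$. It is worth noting that the full retract datum is used: without the compatibility $i\bar f=\bar g j$, the identity $iw=v$ would already fail.
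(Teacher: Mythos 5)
Your argument is correct and complete: it uses exactly the retract data $(\bar f,\bar g)$ in the form the paper itself spells out (as $F$, $G$ with $Ff=\id_A$, $Gg=\id_B$, $iF=Gj$), and the diagram chase, including the single use of injectivity (monicity) of $j$ to deduce $fw=u$, is the intended one. The paper leaves this lemma as ``an easy exercise,'' and your write-up is precisely the standard solution to it.
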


(Here, being a retract means that there exist morphisms $F\colon X\to A$ and $G\colon Y\to B$ such that $Ff=\id_A$, $Gg=\id_B$, and $iF=Gj$.) Lemma \ref{lem:pullback_property} follows from this result by choosing $d_i$ as vertical retractions.

\begin{proof}[Proof of the Addendum]
By the Kan condition for the horn $\Lambda^1_1\subset \Delta^1$, any $x\in X_0$ is $d_0$ of some 1-simplex $e$. Then filling in the $(2,2)$-horn
\[\xymatrix{
& x \ar[rd]^e\\
x \ar[rr]^e \ar@{.>}[ru]^f && y
}\]
yields an edge $f\colon x\to x$; filling in the $(3,0)$-horn
\[\xymatrix{
& x \ar[rd]^f \ar[dd]^(.7)e\\
x \ar[ru]^f \ar[rr]^(0.3)f \ar[rd]_e && x \ar[ld]^e\\
& x
}\] 
shows that $f$ is idempotent (and an equivalence, as any edge in a Kan semisimplicial set). Thus the correspondence $x\mapsto f$ provides a function $s_0$ as required.
\end{proof}

\section{A generalization}\label{sec:generalization}

The proof above works for semi-simplicial objects in other categories than the category of sets. Indeed, let $\calc$ be any category, closed under limits, and provided with a subclass of morphisms called ``cofibrations'', satisfying axioms $\mathbf{A}$ and $\mathbf{B}$ below.
\begin{description}
 \item[A] A split-injection is a cofibration.
\end{description}

For a collection of morphisms $X_i\to X$ ($i\in \{1,\dots, N\}$), their ``union'' $\bigcup_{i=1}^N X_i$ is defined to be the colimit of the objects $X_i$ over their ``intersections'' $X_{ij}:= X_i\times_X X_j$, that is, the colimit of the diagram formed by the objects $X_i$ and the objects $X_{ij}$ (for $i<j$), together with the projection maps $X_{ij}\to X_i$ and $X_{ij}\to X_j$. With this notation, the second axiom reads:

\begin{description}
 \item[B] If $(c_i\colon X_i\rightarrowtail X)_{i\in \{1,\dots, N\}}$ is a finite family of cofibrations, then their ``union'' exists and the  induced map $\bigcup_{i=1}^N X_i\to X$ is a cofibration. 
\end{description}

In the last section we studied the case where $\calc$ is the category of sets and the cofibrations are the injective maps. In this situation, one easily verifies that the ``union'' $\bigcup_n X_n$ maps injectively into $X$, with image the actual union of the subsets $c_n(X_n)\subset X$, which justifies our notation.

As usual, we call a morphism in  $\calc$  an ``acyclic fibration'' if it has the right lifting property against all cofibrations. Clearly the collection of acyclic fibrations is closed under compositions and pull-backs. In our previous example, the category of sets, a map is an acyclic fibration if and only if it is surjective.

Let $s\calc$ denote the category of semi-simplicial objects in $\calc$, and let $X\in s\calc$. Since $\calc$ is closed under limits, the contravariant functor $X$ extends along the Yoneda embedding $\Deltainj\to s\Set$, via the formula 
\[X(A):=\lim_{\Delta^n\to A} X_n \quad (A\in \Set^{(\Deltainj)\op})\]
where the limit is indexed over the category of simplices of $A$. With this definition, the canonical map $X_n\to X(\Delta^n)$ is an isomorphism.

\begin{definition}
Let $X,Y\in s\calc$ and $T\in \calc$.
\begin{enumerate}
\item A semisimplicial map $p\colon X\to Y$ is an \emph{inner fibration} (resp., a \emph{Kan fibration}) if the canonical maps 
\[X_n\to X(\Lambda^n_i)\times_{Y(\Lambda^n_i)} Y_n\]
in $\calc$ are acyclic fibrations for $0<i<n$ (resp., for $0\leq i\leq n$). 
\item Suppose further that $Y$ has a simplicial structure. A map $f\colon T\to X_1$ is \emph{$p$-idempotent} if there exists a map $T\to X_2$ which agrees with $f$ on all three boundaries, and whose image $T\to Y_2$ in $Y$ factors through the degeneracy $Y_0\to Y_2$. 
\item A map $f\colon T\to X_1$ is \emph{$p$-cartesian} if for any $n>0$, the canonical map
\[T\times_{X_1} X_n\to T\times_{X_1} X(\Lambda^n_n)\times_{Y(\Lambda^n_n)} Y_n\]
in $\calc$ is an acyclic fibration, where $X(\Lambda^n_n)$ maps to $X_1$ by the last edge map. The notion of $p$-cocartesianness is defined dually.
\end{enumerate}

\end{definition}

With these notions, we have the following generalizations of Theorem \ref{thm:relative} and Addendum \ref{addendum}.

\begin{theorem}\label{thm:general}
Let $p\colon X\to Y$ and $f\colon A\to X$ be morphisms in $s\calc$ where $p$ is an inner fibration and $f$ an injective cofibration in each semi-simplicial degree, and where $Y$ and $A$ have simplicial structures such that $p\circ f$ is simplicial. Let $s_0\colon X_0\to X_1$ be a $p$-idempotent, $p$-cartesian and $p$-cocartesion morphism in $\calc$ which is compatible with the degeneracies $s_0$ on $A$ and $Y$.

Then $s_0\colon X_0\to X_1$ extends to a simplicial structure on $X$ such that $f$ and $p$ are simplicial.
\end{theorem}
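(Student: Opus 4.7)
The plan is to follow the proof of Theorem~\ref{thm:relative} essentially verbatim, replacing set-theoretic horn filling by diagonal lifting of cofibrations against acyclic fibrations in $\calc$. I would inductively construct an $N$-good system $(s_0,\dots,s_N)$ of morphisms $s_k\colon X_n\to X_{n+1}$ in $\calc$ satisfying all applicable simplicial identities and extending the given degeneracies on $A$ and $Y$; the induction step is broken into the same two lemmas as before (analogues of Lemmas~\ref{lem:step1} and~\ref{lem:step2}), each proved by induction on $n\geq N$.

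At a typical stage one must construct a single morphism $s_N\colon X_n\to X_{n+1}$ (and, in the second lemma, an analogous $T_N\colon X_n\to X_{n+2}$). The simplicial identities for $d_i\circ s_N$, combined with the given simplicial structure on $Y$, prescribe a morphism
\[\alpha\colon X_n \longrightarrow X(\Lambda^{n+1}_{N+1})\times_{Y(\Lambda^{n+1}_{N+1})}Y_{n+1},\]
while compatibility with $f\colon A\to X$ and with the already-constructed $s_i$ for $i<N$ prescribes a morphism into $X_{n+1}$ from a ``union'' $U_n\subset X_n$ of $f(A_n)$ and the images of the $s_i$. Since the $s_i$ are split by the $d_i$, axiom~A makes them cofibrations; together with the cofibration $f$ and axiom~B this guarantees that $U_n\hookrightarrow X_n$ is a cofibration, and the well-definedness of the morphism $U_n\to X_{n+1}$ reduces to the analog of Lemma~\ref{lem:pullback_property} in $\calc$, which is obtained by applying the categorical Lemma~\ref{lem:general_pullback_property} exactly as in the set case (using the $d_i$ as retractions and the monomorphism property of $f$).

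The morphism $s_N$ is then defined as a diagonal lift in
\[\xymatrix{
U_n \ar@{^{(}->}[d] \ar[r] & X_{n+1} \ar[d]\\
X_n \ar[r]^-{\alpha} \ar@{.>}[ru] & X(\Lambda^{n+1}_{N+1})\times_{Y(\Lambda^{n+1}_{N+1})}Y_{n+1}.
}\]
For $n>N$ the horn $\Lambda^{n+1}_{N+1}$ is inner, so the right vertical map is an acyclic fibration by the inner fibration hypothesis, and the lift exists against the cofibration on the left. For $n=N$ the horn is a right horn whose last edge factors as $s_0\circ d_0^N$; pulling back the $p$-cartesian hypothesis on $s_0$ along $d_0^N\colon X_n\to X_0$ makes the relevant right-hand map an acyclic fibration, and the lift again exists. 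The construction of $T_N$ is entirely analogous, using $p$-cocartesianness of $s_0$ to handle the left horn $\Lambda^{n+2}_0$ in the base case $N=0$, and the $p$-idempotent hypothesis to supply the initial map $T_0\colon X_0\to X_2$.

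The main obstacle is organizational rather than conceptual: one must check systematically that the prescribed morphism $U_n\to X_{n+1}$ assembled from the forced values on $f(A_n)$ and the $s_i(X_{n-1})$ is well-defined on overlaps (requiring repeated use of the categorical Lemma~\ref{lem:pullback_property} and of the universal property of the colimit in axiom~B), and that the cartesian, cocartesian, and idempotent hypotheses in $\calc$ translate cleanly into lifting properties for the squares above after the necessary base changes. Once these are in place, the combinatorial backbone of the induction from Section~\ref{sec:proofs}---including the case-by-case simplicial identity verifications---carries over to $s\calc$ essentially verbatim.
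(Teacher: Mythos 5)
Your proposal matches the paper's own argument: the paper likewise proves Theorem \ref{thm:general} by rerunning the proof of Theorem \ref{thm:relative}, lifting the cofibration from the ``union'' $B$ of the $s_i$ (cofibrations by axiom A, being split by the $d_i$) and $f(A_n)$ against the acyclic fibration $X_{n+1}\to X(\Lambda^{n+1}_{N+1})\times_{Y(\Lambda^{n+1}_{N+1})}Y_{n+1}$, with Lemma \ref{lem:pullback_property} identifying the intersections and the cartesian/cocartesian/idempotent hypotheses handling the boundary cases $n=N$ and the construction of $T_N$. No substantive differences.
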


\begin{addendum}
If $p$ is a Kan fibration, then a map $s_0\colon X_0\to X_1$ as required in the Theorem exists always, so that a compatible simplicial structure on $X$ exists without further hypotheses.
\end{addendum}

The proof of Theorem \ref{thm:general} is identical to the proof of Theorem \ref{thm:relative}: In terms of this section, the proof of Lemma \ref{lem:step1} constructs a  commutative solid square
\[\xymatrix{
B \ar[r] \ar@{>->}[d] & X_{n+1} \ar[d]\\
X_n \ar[r] \ar@{.>}[ru]^{s_N} & X(\Lambda^{n+1}_{N+1})\times_{Y(\Lambda^{n+1}_{N+1})} Y_{n+1}
}\]
where $B$ is the ``union'' of the cofibrations $s_i\colon X_{n-1}\to X_{n}$, $i<N$, and the map $f\colon A_n\to X_{n}$; note that Lemma \ref{lem:pullback_property} precisely identifies the ``intersections'' of $s_i$ with $s_j$ and with $f$; and the calculation following the Lemma shows that the map $B\to X_{n+1}$ is well-defined. Therefore $s_N\colon X_n\to X_{n+1}$ exists by definition of inner fibration (in the case $n>N$) and by definition of $p$-cartesian (in the case $n=N$). By induction this gives rise to an almost $N$-good structure just as in the proof of Theorem \ref{thm:general}. The same re-writing can be made for Lemma \ref{lem:step2} and the second step of the proof. The proof of the Addendum is completely analogous. Here are two examples.

\subsection{Semi-Segal spaces}

We take $\calc$ the category of simplicial sets, with the usual notion of cofibration (level-wise injective maps). Then  a map is an acyclic fibration in our sense if and only if it is a Kan fibration and a weak equivalence (after realization). We call an object in $s\calc$ a semisimplicial space for short.

A map $p$ in $s\calc$ is a \emph{Reedy fibration} if for any inclusion $A\subset B$ of semi-simplicial sets, the induced map
\[X(B) \to Y(B)\times_{Y(A)} X(A)\]
is a Kan fibration. The space $X$ is called \emph{Reedy fibrant} if the projection $X\to \{*\}$ is a Reedy fibration. 

Let $I_n\subset \Delta^n$ be the semi-simplicial subset spanned by the edges $(i, i+1)$, where $i=0,\dots, n-1$. By definition, $X$ is a \emph{semi-Segal space} if it is Reedy fibrant and if for each $n>1$, the map $X_n\to X({I_n})$ induced by the inclusion $I_n\to \Delta^n$ is a weak equivalence of spaces. 

The following is a variation of \cite[3.4]{Joyal-Tierney(2007)}.

\begin{lemma}
A Reedy fibration $p\colon X\to Y$ between semi-Segal spaces is an inner fibration in our sense. 
\end{lemma}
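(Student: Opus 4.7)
The plan is to reduce the statement to a purely Segal-theoretic claim about the single-space horn maps $Z_n \to Z(\Lambda^n_i)$, then dispatch the latter by induction, mimicking Joyal--Tierney. Being a Kan fibration is exactly the Reedy-fibration condition applied to the inclusion $\Lambda^n_i\subset \Delta^n$, so the content is that the map
\[X_n\longrightarrow X(\Lambda^n_i)\times_{Y(\Lambda^n_i)} Y_n\]
is a weak equivalence for $0<i<n$.

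For this, I would consider the square with rows $X_n\to Y_n$ and $X(\Lambda^n_i)\to Y(\Lambda^n_i)$. The bottom map is a Kan fibration, again by Reedy fibrancy applied to $\emptyset\subset \Lambda^n_i$. Granting the key claim that, for any semi-Segal space $Z$ and any inner horn inclusion, the map $Z_n\to Z(\Lambda^n_i)$ is a weak equivalence, the right vertical map is a weak equivalence (applied to $Y$); right properness of the Kan--Quillen model structure makes its base change $X(\Lambda^n_i)\times_{Y(\Lambda^n_i)} Y_n \to X(\Lambda^n_i)$ a weak equivalence as well; and the composite $X_n\to X(\Lambda^n_i)\times_{Y(\Lambda^n_i)} Y_n \to X(\Lambda^n_i)$ coincides with the map $X_n\to X(\Lambda^n_i)$, which is again a weak equivalence by the claim, this time applied to $X$. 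Two-out-of-three concludes.

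The substantive step is the key claim. I would factor it through the spine as $Z_n\to Z(\Lambda^n_i)\to Z(I_n)$; the composite is a weak equivalence by the semi-Segal condition, so it suffices to prove the second map is a weak equivalence. I would proceed by induction on $n$; the base $n=2$ is trivial since $\Lambda^2_1=I_2$. For the inductive step, I would build a filtration $I_n=K_0\subset K_1\subset\cdots \subset K_m=\Lambda^n_i$ by successively attaching the faces $d_j\Delta^n$ for $j\neq i$ in a suitable order (beginning with $d_{i-1}\Delta^n$ and $d_{i+1}\Delta^n$, whose spines already lie inside $I_n$), so that at each step the attaching locus $K_{j-1}\cap d_j\Delta^n\hookrightarrow d_j\Delta^n$ is either a spine inclusion or an inner horn inclusion of the $(n-1)$-simplex $d_j\Delta^n$. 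Applying $Z(-)$ to each pushout yields a homotopy pullback of simplicial sets (thanks to Reedy fibrancy of $Z$), in which the attaching map is a weak equivalence by the semi-Segal hypothesis or by the inductive hypothesis; hence each step $Z(K_j)\to Z(K_{j-1})$ is a weak equivalence, and the composite $Z(\Lambda^n_i)\to Z(I_n)$ follows.

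The main obstacle is the combinatorial bookkeeping in this last step: one must verify that the chosen order of attachment really presents the trace $K_{j-1}\cap d_j\Delta^n$ inside $d_j\Delta^n$ as an inner horn (or a spine) of the correct shape, so that the induction actually applies. This is a standard combinatorial fact in Joyal's theory, essentially the statement that inner horn inclusions and spine inclusions generate the same saturated class of maps between finite semisimplicial sets; once in place, the rest of the argument is formal.
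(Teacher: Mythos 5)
Your reduction is exactly the one in the paper: the fibration property is the Reedy condition for $\Lambda^n_i\subset\Delta^n$, and the weak-equivalence property follows by two-out-of-three and right properness once one knows that $Z_n\to Z(\Lambda^n_i)$ is a weak equivalence for every semi-Segal space $Z$ and every inner horn. The paper obtains this last fact by considering the class $\cala$ of injective semisimplicial maps $A\to B$ for which $Z(B)\to Z(A)$ is an acyclic fibration --- a class closed under composition, pushout and right cancellation, and containing the spine inclusions by the Segal condition --- and then citing \cite[Lemma 3.5]{Joyal-Tierney(2007)}, which says precisely that such a class contains all inner horn inclusions. So the only point where you diverge is that you attempt to reprove that cited lemma, and there your sketch has a genuine gap.

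Concretely: the parenthetical claim that the spines of $d_{i-1}\Delta^n$ and $d_{i+1}\Delta^n$ lie in $I_n$ already fails for $n=3$ (for $i=1$, the face $d_2\Delta^3$ has vertices $0,1,3$, and its spine contains the edge $(13)\notin I_3$). More seriously, a filtration $I_n=K_0\subset\cdots\subset K_m=\Lambda^n_i$ by whole faces in which every attaching locus $K_{j-1}\cap d_j\Delta^n\hookrightarrow d_j\Delta^n$ is a spine or inner horn inclusion does not exist in general. For $n=4$, $i=2$, the only faces whose intersection with $I_4$ is a spine are $d_0$ and $d_4$; after attaching either of them, each remaining face meets the resulting complex in a subcomplex such as $\Delta^{\{1,2,3\}}\cup\Delta^{\{0,1\}}$ or $\Delta^{\{2,3,4\}}\cup\{0\}$ inside a copy of $\Delta^3$ --- containing one $2$-face, hence too large to be a spine and too small to be an inner horn. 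The filtration can still be pushed through, but only by admitting such more general attaching loci and invoking the right cancellation property of $\cala$ (if $Z(C)\to Z(A)$ and $Z(B)\to Z(A)$ are acyclic fibrations for $A\subset B\subset C$, then so is $Z(C)\to Z(B)$), which is in effect what the Joyal--Tierney argument does. Either cite their Lemma 3.5, as the paper does, or replace your spine-or-inner-horn dichotomy by an argument that uses cancellation in this essential way.
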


\begin{proof}
The map in question is a fibration by the fact that $p$ is a Reedy fibration. To show that it is a weak equivalence, we show that for each inner horn $\Lambda^n_k$, $0<k<n$, in the square
\[\xymatrix{
 X_n \ar[d] \ar[r]^p & Y_n \ar[d]\\
 X(\Lambda^n_k) \ar[r]^p & Y(\Lambda^n_k)
}\]
the vertical maps are weak equivalences. 

Recall that the forgetful functor from simplicial sets to semisimplicial sets has a left adjoint $A\mapsto A^+$ which is an embedding of categories. Let $\cala\subset \calc$ be the class of injective semi-simplicial maps (that is, injective simplicial maps that are of the form $f^+\colon A^+\to B^+$) such that $f^*\colon Y(B)\to Y(A)$ is a weak equivalence (hence an acyclic fibration). As $Y$ is Reedy fibrant by assumption, the  class $\cala$ contains the inclusion $L_n\to [n]$; by \cite[Lemma 3.5]{Joyal-Tierney(2007)} it contains therefore every inner horn inclusion $\Lambda^n_k\to [n]$, too. Thus $Y_n\to Y(\Lambda^n_k)$ is an acyclic fibration. The same argument applies to $X$.
\end{proof}

As a consequence, we deduce from Theorem \ref{thm:general} the following result. (Recall that here ``space'' means ``simplicial set''.)

\begin{theorem}\label{thm:relative_Segal}
Let $p\colon X\to Y$ be an inner fibration of semi-Segal spaces and $f\colon A\to X$ the inclusion of a semisimplicial subspace; assume that $Y$ and $A$ have simplicial structures such that $p\circ f$ is a simplicial map. Let $s_0\colon X_0\to X_1$ be a map, compatible with the degeneracies $s_0$ on $A$ and $Y$, and such that $s_0$ is $p$-idempotent, $p$-cartesian, and $p$-cocartesian. 

Then $s_0\colon X_0\to X_1$ extends to a simplicial structure on $X$ such that $f$ and $p$ are simplicial.
\end{theorem}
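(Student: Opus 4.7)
The plan is to deduce Theorem~\ref{thm:relative_Segal} as a direct instance of Theorem~\ref{thm:general}, applied to the category $\calc$ of simplicial sets equipped with the class of levelwise injective maps as cofibrations. So the first step is to verify that this choice of $\calc$ satisfies axioms $\mathbf{A}$ and $\mathbf{B}$. Both reduce to the case of sets applied degreewise: a split injection of simplicial sets is levelwise a split injection of sets, hence levelwise injective; and for a finite family of levelwise injections $c_i\colon X_i \rightarrowtail X$, the intersections $X_{ij} = X_i \times_X X_j$ and hence the ``union'' may be formed degreewise, producing a simplicial set whose induced map to $X$ is levelwise the inclusion of a union of subsets. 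Note that in this setup, the acyclic fibrations are precisely the trivial Kan fibrations.

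Next, I would translate the hypotheses of Theorem~\ref{thm:relative_Segal} into those of Theorem~\ref{thm:general}. The assumption that $p$ is an inner fibration of semi-Segal spaces is, by the definition in Section~\ref{sec:generalization}, exactly the condition that $p$ be an inner fibration in $s\calc$. The assumption that $f\colon A \to X$ is the inclusion of a semisimplicial subspace means that $f$ is levelwise injective in each semi-simplicial degree, which is the required cofibration hypothesis on $f$. Finally, the conditions that $s_0\colon X_0 \to X_1$ is $p$-idempotent, $p$-cartesian, $p$-cocartesian, and compatible with the given degeneracies on $A$ and $Y$ are phrased identically in both statements. Consequently all hypotheses of Theorem~\ref{thm:general} hold, and its conclusion produces a simplicial structure on $X$ extending $s_0$ and making $f$ and $p$ simplicial.

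Since the inductive construction of degeneracies and the verification of the simplicial identities were already carried out in the proof of Theorem~\ref{thm:general}, no new combinatorial content is required here; the real work lies in the translation of hypotheses, which is the routine step above. The only mild point worth flagging is the role of the semi-Segal condition, which enters Theorem~\ref{thm:relative_Segal} only to give sense to the definition of inner fibration; the preceding lemma, which bridges Reedy fibrations and inner fibrations, is a useful tool for exhibiting examples but is not strictly needed for the deduction itself.
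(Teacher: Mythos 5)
Your proposal is correct and matches the paper's own treatment: Theorem~\ref{thm:relative_Segal} is stated there precisely as the specialization of Theorem~\ref{thm:general} to $\calc$ the category of simplicial sets with levelwise injections as cofibrations, and your verification of axioms $\mathbf{A}$ and $\mathbf{B}$ together with the translation of hypotheses is exactly the (routine) content the paper leaves implicit. Your remark that the semi-Segal hypothesis serves only to connect with the Reedy-fibration lemma, and is not needed for the deduction itself, is also accurate.
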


We close this section by giving a criterion for $p$-(co-)cartesianness. For $\sigma\in X(A)$ and $A\subset B$, we denote by $X(B)/\sigma\subset X(B)$ the subspaces of all elements mapping to  $\sigma\in X(A)$ under the map induced by the inclusion $A\subset B$.

\begin{lemma}\label{lem:criterion}
For a Reedy fibration $p\colon X\to Y$ of semi-Segal spaces, and $f\colon T\to X_1$, the following are equivalent:
\begin{enumerate}
\item $f$ is $p$-cartesian. 
\item For any $t\in T_0$, the composite $\{*\}\xrightarrow{t} T\xrightarrow{f} X_1$ is $p$-cartesian.
\item For any $t\in T_0$, with $e:=f(t)\colon x'\to x$, the following commutative square is a homotopy pull-back:
\[\xymatrix{
X_2/e \ar[rr]^{d_1} \ar[d]^p && X_1/x \ar[d]^p\\
Y_2/p(e) \ar[rr]^{d_1} && Y_1/p(x)
}\]
\end{enumerate}
\end{lemma}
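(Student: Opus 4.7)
The plan is to prove (i) $\Leftrightarrow$ (ii) by a fiberwise argument, the easy direction (ii) $\Rightarrow$ (iii) by unpacking the $n=2$ case, and the main implication (iii) $\Rightarrow$ (ii) by induction on $n$.

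For (i) $\Leftrightarrow$ (ii), the map appearing in the definition of $p$-cartesianness is already a Kan fibration, being a pullback of the Reedy-induced fibration $X_n \to X(\Lambda^n_n)\times_{Y(\Lambda^n_n)} Y_n$; so being an acyclic fibration can be tested fiberwise over vertices of the target. Since every vertex of the target maps to some $t \in T_0$, the fiberwise condition for the family is equivalent to the fiberwise condition for each individual point $t \in T_0$. This handles both directions of (i) $\Leftrightarrow$ (ii) simultaneously.

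For (ii) $\Rightarrow$ (iii), I would take $n = 2$ in (ii) at the edge $e = f(t) \colon x' \to x$. The horn $\Lambda^2_2$ consists of two edges sharing vertex~$2$; fixing the last edge to $e$ leaves the other edge free but forces its target to be $x$. Hence one obtains identifications $X(\Lambda^2_2)/e \cong X_1/x$ and $Y(\Lambda^2_2)/p(e) \cong Y_1/p(x)$, under which the $n=2$ condition reads precisely that the canonical map $X_2/e \to X_1/x \times_{Y_1/p(x)} Y_2/p(e)$ is an acyclic fibration. Because the vertical maps of the square in (iii) are Kan fibrations (by Reedy fibrancy), this is equivalent to the square being a homotopy pull-back.

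The main work is (iii) $\Rightarrow$ (ii). I would induct on $n \geq 2$, the base case $n = 2$ being (iii). For $n \geq 3$, the strategy is to factor the inclusion $\Lambda^n_n \hookrightarrow \Delta^n$ through a chain of inner-horn extensions ending with a single outer-horn extension of strictly smaller dimension whose last edge, through the Segal decomposition, becomes $e$. The inner-horn steps give acyclic fibrations upon passing to $X(-)/e$ and $Y(-)/p(e)$ by the inner fibration hypothesis combined with Joyal--Tierney Lemma~3.5, invoked exactly as in the preceding Lemma; the single remaining outer step is then handled by the inductive hypothesis. The main obstacle will be producing this combinatorial decomposition: one must verify that all nontrivial cartesian data over the fiber at $e$ and $p(e)$ can be concentrated in a single $2$-dimensional right-horn step governed by (iii), while every intermediate extension is inner-anodyne (hence invisible, via Segal and Reedy, to the fiber data). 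I expect this to require the analogue, in the semi-Segal setting, of the standard quasi-categorical fact that cartesianness of a $1$-simplex is detected in degree~$2$.
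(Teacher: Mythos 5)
Your treatment of (i) $\Leftrightarrow$ (ii) and of (ii) $\Rightarrow$ (iii) matches the paper: both maps in the defining condition are Kan fibrations over $T$ (Reedy fibrancy plus stability of fibrations under pullback), so acyclicity is detected on fibers over $T_0$; and the degree-$2$ instance of (ii), after the identification $X(\Lambda^2_2)/e\cong X_1/x$, is exactly the homotopy-pullback condition (iii). The problem is the implication (iii) $\Rightarrow$ (ii), where your argument has a genuine gap at its central step. The factorization you propose does not exist: the inclusion $\Lambda^n_n\hookrightarrow\Delta^n$ adds exactly two nondegenerate simplices (the face $\{0,\dots,n-1\}$ and the top cell), so it admits no nontrivial factorization through horn fillings inside $\Delta^n$ --- there is no chain of inner-horn extensions "ending with a single outer-horn extension of strictly smaller dimension". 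Any such reduction has to take place in a larger ambient complex, and producing it is the entire content of the lemma. Indeed, you say yourself that you expect to need "the analogue, in the semi-Segal setting, of the standard quasi-categorical fact that cartesianness of a $1$-simplex is detected in degree $2$" --- but that fact \emph{is} the implication (iii) $\Rightarrow$ (i) you are trying to prove (it is the semi-Segal analogue of Lurie's mapping-space criterion), so as written the argument is circular. You also never make essential use of the Segal condition in this direction, whereas the statement is false without it.

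For comparison, the paper's route is structurally different. It forms, for each $n>0$, the three-column diagram
\[
X_{n+1}/e \longrightarrow X(\Lambda^{n+1}_{n+1})/e \xrightarrow{\;d_n\;} X_n/y
\]
lying over the corresponding diagram for $Y$, and observes that (ii) is the assertion that the \emph{left} square is a homotopy pullback for all $n$. First, the Segal condition identifies $X_{n+1}/e\simeq X_{n-1}\times_{X_0}X_2/e$ and $X_n/y\simeq X_{n-1}\times_{X_0}X_1/y$, so hypothesis (iii) implies that the \emph{total} square is a homotopy pullback for every $n$. Second, an induction on $n$ shows the \emph{right} square is a homotopy pullback: the inclusion of the face $d_n\Delta^{n+1}$ into $\Lambda^{n+1}_{n+1}$ is built by successively filling outer horns $\Lambda^k_k$ of dimension $k\le n$ whose last edge is $(n,n+1)$, so each step is governed by the inductive hypothesis (the base case $n=1$ being trivial since there the right horizontal maps are isomorphisms). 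The left square is then a homotopy pullback by two-out-of-three. If you want to salvage your induction, this is the decomposition you need: it concerns $\Delta^n\subset\Lambda^{n+1}_{n+1}$ (an inclusion that adds many simplices, all containing the vertex $n$), not $\Lambda^n_n\subset\Delta^n$, and the Segal condition must be invoked explicitly to control the composite map.
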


\begin{remark*}
By the Segal condition, the map $d_2\colon X_2/e\to X_1/x'$ is a weak equivalence so the horizontal maps in the diagram may be thought of as ``postcomposition by $e$ and $p(e)$'', respectively.
\end{remark*}

\begin{proof}[Proof of Lemma \ref{lem:criterion}]
(i) implies (ii) because acyclic fibrations are stable under pull-back. For the converse direction, we note that in the map under consideration, 
\[T\times_{X_1} X_n\to T\times_{X_1} X(\Lambda^n_0)\times_{Y(\Lambda^n_0)} Y_n\]
both domain and target are Kan fibrations over $T$. Therefore, to show that the map is a weak equivalence, it suffices to test on all fibers over elements of $T_0$. But this is condition (ii).

For the equivalence between (ii) and (iii), we consider the following diagram (for $n>0$):
\begin{equation}\label{eq:comparison_for_criterion}
\xymatrix{
X_{n+1}/e \ar[d]^p \ar[r] & X(\Lambda^{n+1}_{n+1})/e \ar[d]^p \ar[r]^{d_n}&  X_n/y \ar[d]^p\\
Y_{n+1}/p(e) \ar[r] & Y(\Lambda^{n+1}_{n+1})/p(e)  \ar[r]^{d_n} & Y_n/p(y)
}
\end{equation}
and notice that condition (ii) is equivalent to the left square being a homotopy pull-back, for any $e=f(t)\in X_1$. Now we note that for $n=1$, the right horizontal maps are isomorphisms. Hence, if (ii) holds, then the total square is a homotopy pull-back for $n=1$, that is (iii) holds.

Conversely assume that (iii) holds. We consider the commutative diagram
\[\xymatrix{
X_{n+1}/e \ar[r]^{d_n} \ar[d]^\simeq & X_n/y \ar[d]^\simeq \\
X_{n-1}\times_{X_0} X_2/e \ar[r]^{\id\times d_1}_\simeq & X_{n-1}\times_{X_0} X_1/y
}\]
where the vertical arrows are equivalences by the Segal condition and the lower horizontal map is one by (iii). It follows that the total square in \eqref{eq:comparison_for_criterion} is a homotopy pull-back for all $n>0$. 

We show by induction on $n$ that the left square is a homotopy pull-back, too.  If $n=1$, then we remarked above that the right horizontal maps are isomorphisms which immediately implies the claim. For the induction step, we note that the inclusion $\Delta^n\subset \Lambda^{n+1}_0$ of the $n$-th face is obtained by filling in horns $\Lambda^k_0$ for $k\leq n$, with last edge $(n, n+1)$. (By induction on $k$, fill in all pairs of type $(i_1,\dots, i_k, n)$ and $(i_1, \dots, i_k, n+1)$; for each such pair this corresponds to filling in a horn as required.) By the inductive assumption, it follows that the right square is a homotopy pull-back, hence so is the left.
\end{proof}

\subsection{Multi-semisimplicial sets}

We take $\calc=s^k\Set$ the category of $k$-fold semisimplicial sets, where a morphism defined to be a cofibration if it is injective in each multi-semisimplicial level. We say that an object $X$ of $\calc$ satisfies the Kan condition if, after writing $s^k\Set= s(s^{k-1}\Set)$ by singling out any of the $k$ simplicial directions, any map
\[X_n \to X(\Lambda^n)\]
induced by a horn inclusion $\Lambda^n\subset \Delta^n$ is an acyclic fibration in the sense of this section. (This is equivalent to \cite[Definition 5.2]{McC}.)

\begin{theorem}[{\cite{McC}}]
Any $k$-fold semisimplicial set which satisfies the Kan condition, has a $k$-fold simplicial structure.
\end{theorem}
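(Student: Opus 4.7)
The plan is induction on $k$ with a strengthened induction hypothesis: for any category $\calc$ satisfying Axioms $\mathbf{A}$ and $\mathbf{B}$, any $k$-fold semi-simplicial object $X$ in $\calc$ satisfying the natural Kan condition---that after singling out any of the $k$ directions, $X_n\to X(\Lambda^n)$ is an acyclic fibration in the category of $(k-1)$-fold semi-simplicial objects in $\calc$---admits a $k$-fold simplicial structure. The base case $k=1$ is the Addendum to Theorem \ref{thm:general}, applied with $Y$ terminal and $A=\emptyset$.

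For the inductive step, single out one direction, say the $k$-th, and view $X$ as a semi-simplicial object in $\calc_k:=s^{k-1}\calc$, the category of $(k-1)$-fold semi-simplicial objects in $\calc$. Its cofibrations are the level-wise injections, and Axioms $\mathbf{A}$ and $\mathbf{B}$ are inherited from those of $\calc$. The Kan condition in direction $k$ is precisely that $X\to\{*\}$ is a Kan fibration in $s\calc_k$, so the Addendum to Theorem \ref{thm:general} yields a simplicial structure in direction $k$; since its degeneracies are morphisms in $\calc_k$, they commute automatically with the semi-simplicial operations in directions $1,\ldots,k-1$. Now re-view $X$ as a $(k-1)$-fold semi-simplicial object in $\calc'$, the category of simplicial objects in $\calc$, into which the direction-$k$ simplicial structure has been absorbed. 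One checks that $\calc'$ again satisfies Axioms $\mathbf{A}$ and $\mathbf{B}$; the induction hypothesis then yields simplicial structures in directions $1,\ldots,k-1$, which by construction are compatible with the direction-$k$ structure, giving the desired $k$-fold simplicial structure on $X$.

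The main obstacle is verifying that the Kan condition persists when the value category is enlarged from $\calc_k$ to $\calc'$: a map that is an acyclic fibration in $s^{k-2}\calc_k$ must be shown to remain one in $s^{k-2}\calc'$. The cofibrations in the latter are precisely those cofibrations in the former which additionally respect the direction-$k$ simplicial structure, so right-lifting against them is formally weaker, but one must now produce lifting diagonals that themselves respect that extra structure. This is resolved by re-examining the proof of Theorem \ref{thm:general} in the spirit of the treatment of degenerate simplices in Lemma \ref{lem:step1}: the fillers are built horn by horn, and at each stage the choice can be made compatibly with any pre-existing simplicial structure on the relevant subobject. Once this is established, the induction closes the argument.
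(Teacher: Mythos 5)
Your argument is essentially the paper's: it likewise inducts on the number of semisimplicial directions, strengthening the statement to ``$k$-fold semisimplicial $l$-fold simplicial sets'' (which is just another way of phrasing your parametrization by the value category), and peels off one direction at a time via the Addendum to Theorem \ref{thm:general} before absorbing the new simplicial direction into the coefficients. The persistence-of-the-Kan-condition issue you flag at the end is real but is passed over silently in the paper's one-line induction step, so your treatment is, if anything, the more explicit one.
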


\begin{proof}
We show more generally that any $k$-fold semisimplicial $l$-fold simplicial set $X$, satisfying the Kan condition, has a $(k+l)$-fold simplicial structure. The proof is by induction on $k$, where the induction beginning $k=0$ holds obviously. For the induction step, we view $X$ as a semisimplicial object in the category of $(k-1)$-fold semisimplicial $l$-fold simplicial sets. By Theorem \ref{thm:general}, this can be promoted to a simplicial object, corresponding to a $(k-1)$-fold $(l+1)$-fold simplicial set. But this admits a simplicial structure by induction hypothesis.
\end{proof}

%
%
%

\end{document}